\documentclass[a4paper,reqno]{amsart}
\usepackage{amsmath, amssymb, eucal, amscd, amstext, verbatim, enumerate, color}

\topmargin -0.1in
\textwidth 6.25in
\textheight 8.5in
\oddsidemargin -0.2in
\evensidemargin -0.2in

\newtheorem{thm}{Theorem} 
\newtheorem{lem}[thm]{Lemma}

\newtheorem{prop}[thm]{Proposition}
\newtheorem{cor}[thm]{Corollary}

\newtheorem{rem}[thm]{Remark}
\newenvironment{remark}{\begin{rem}\rm}{\end{rem}}
\newtheorem{ex}[thm]{Example}
\newenvironment{example}{\begin{ex}\rm}{\end{ex}}


\newcommand{\CB}{\mathcal{B}}

\newcommand{\KH}{\mathfrak{H}}

\newcommand{\KI}{\mathfrak{I}}

\newcommand{\KK}{\mathfrak{K}}

\newcommand{\BC}{\mathbb{C}}

\newcommand{\BG}{\mathbb{G}}

\newcommand{\supp}{\mathrm{supp}\ \!}
\newcommand{\tI}{\mathrm{I}}
\newcommand{\ru}{\mathrm{u}}

\begin{document}
\title{Property $(T)$ for locally compact groups and $C^*$-algebras}

\author{Bachir Bekka and Chi-Keung Ng}

\address[Bachir Bekka]{ Univ Rennes, IRMAR--UMR 6625, F-35000  Rennes, France.}
\email{bachir.bekka@univ-rennes1.fr}

\address[Chi-Keung Ng]{Chern Institute of Mathematics and LPMC, Nankai University, Tianjin 300071, China.}
\email{ckng@nankai.edu.cn; ckngmath@hotmail.com}

\keywords{locally compact groups; group $C^*$-algebras; Property $(T)$}

\subjclass[2010]{Primary: 46L05, 46L55; 37A15; 37A25.}

\thanks{The first author is supported  by the  Agence Nationale de la Recherche (ANR-11-LABX-0020-01, ANR-14-CE25-0004); the second author is supported by the National Natural Science Foundation of China (11471168).}
\begin{abstract}
Let $G$ be a  locally compact group  and let   
$C^*(G)$ and  $C^*_r(G)$ be the full group $C^*$-algebra and the reduced group $C^*$-algebra  of $G$.
 We investigate the relationship between Property $(T)$  for $G$ and Property $(T)$ as well as its strong version   for $C^*(G)$ and $C^*_r(G)$ (as defined in \cite{Ng-nu-T}).
We show that  $G$ has Property $(T)$ if (and only if) $C^*(G)$ has Property $(T)$. 
In the case where $G$ is a locally compact IN-group, we prove that  $G$ has Property $(T)$ if and only if $C^*_r(G)$ has 
strong Property $(T)$.  We also show that  $C^*_r(G)$ has  strong Property $(T)$ for every non-amenable locally compact group $G$  for which $C^*_r(G)$ is nuclear. Some of these groups (as for instance $G=SL_2(\mathbf{R})$)  do not have Property $T$.
\end{abstract}

\date{\today}
\maketitle

\section{Introduction and statement of the results}
\label{S0}
Kazhdan's Property $(T)$,  introduced by Kazhdan, in \cite{Kaz}, is a rigidity property of unitary representations of locally compact groups
 with amazing  applications,   ranging  from geometry and group theory to operator algebras and graph theory (see \cite{BHV}).
Property $(T)$ was defined in the context of operator algebras by Connes and Jones, in \cite{ConnesJones}, for von Neumann algebras,
and by the first named author, in \cite{B}, for \emph{unital}  $C^*$-algebras (in terms of approximate central vectors and central vectors
for appropriate bimodules over the  algebras).

This allowed to characterize Property $(T)$ for a \emph{discrete} group $\Gamma$  in terms 
of various operator algebras attached to it: $\Gamma$ has Property $(T)$ if and only if  $A$
has Property $(T)$, where 
$A= C^*_r(\Gamma)$ is
the reduced group $C^*$-algebra, or  $A= C^*(\Gamma)$ is the full group $C^*$-algebra
of $\Gamma$. 
More generally,  Property $(T)$ for a pair of groups $\Lambda \subset \Gamma$ 
 (also called relative Property $(T)$)  is characterized by  Property $(T)$ for  the pair of the corresponding 
 $C^*$-algebras (as for instance $C^*_r(\Lambda)\subset C^*_r(\Gamma)$).
 
Property (T) for unital  $C^*$-algebras  was further studied by various authors (see e.g. \cite{Bro}, \cite{LN}, \cite{LNW} and \cite{Suz}) and  a stronger version of it, called the  strong Property $(T)$, was defined  by the second named author \emph{et al.} in \cite{LN}. 

Let $G$ be a locally compact group. Recall that the  reduced group $C^*$-algebra  $C^*_r(G)$ or the full group $C^*$-algebra  $C^*(G)$ of $G$ is unital 
if and only if $G$ is discrete (see \cite{Milnes}). It is important to study Property (T) in the context of  non-discrete locally compact groups for its own sake or as a tool towards establishing this property for suitable discrete subgroups of them
(this is for instance how usually lattices in higher rank Lie groups are shown to have Property (T); see \cite{BHV}).
So, it is of interest to study the relationship between  Property (T) for $G$ and  suitable rigidity properties 
of $C^*_r(G)$ and  $C^*(G)$.

There have been attempts to define  a notion of Property $(T)$ for  \emph{non-unital} $C^*$-algebras.  
As shown in \cite{LNW},  the  most straightforward extension of the definition given in  \cite{B}    leads to a disappointing result:
no separable non-unital $C^*$-algebra has such Property $(T)$.
In \cite{Ng-nu-T}, the second named author introduced a refined notion of Property $(T)$ and strong Property $(T)$ for general $C^*$-algebras (as well as their relative versions)  which seems to be more sensible (see Section~\ref{S1} below).  
For instance, it was shown in \cite{Ng-nu-T} that, if $G$ has Property $(T)$,  then $C^*(G)$ has strong Property $(T)$. 

The aim of the article is to  further investigate Property $(T)$   as defined in  \cite{Ng-nu-T}.
Our first theorem shows that Property $(T)$ of a general locally compact group is indeed characterized by  Property $(T)$ of its full group $C^*$-algebra.

\begin{thm}
\label{thm:full}
Let $G$ be a locally compact group and $H$ a closed subgroup of $G$. 

\noindent
(a) The pair  $(G,H)$ has Property $(T)$ if and only if 
the pair  $(C^*(G),C^*(H))$ has Property $(T)$. 

\noindent
(b) $G$ has Property $(T)$ if and only if $C^*(G)$ has Property $(T)$ (equivalently, strong Property $(T)$). 
\end{thm}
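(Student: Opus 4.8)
The plan is to deduce the whole statement from part (a) together with the implication ``$G$ has Property $(T)$ $\Rightarrow$ $C^*(G)$ has strong Property $(T)$'' of \cite{Ng-nu-T}. Indeed, part (b) for the non-strong version is exactly part (a) with $H=G$; and since strong Property $(T)$ trivially implies Property $(T)$ (Section~\ref{S1}), the chain $G$ has $(T)$ $\Rightarrow$ $C^*(G)$ has strong $(T)$ $\Rightarrow$ $C^*(G)$ has $(T)$ $\Rightarrow$ $G$ has $(T)$ --- whose last arrow is part (a) --- shows that for $C^*(G)$ the two versions coincide and are both equivalent to Property $(T)$ for $G$. So it remains to prove part (a).

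The engine for (a) is the correspondence between unitary representations of $G$ and non-degenerate $*$-representations of $C^*(G)$, lifted to bimodules. Using the canonical isomorphisms $C^*(G)^{\opp}\cong C^*(G)$ (implemented on $C_c(G)$ by $f\mapsto\big(x\mapsto\Delta_G(x^{-1})f(x^{-1})\big)$) and $C^*(G)\otimes_{\max}C^*(G)\cong C^*(G\times G)$, a non-degenerate Hilbert $C^*(G)$-bimodule is the same datum as a pair $(\pi_1,\pi_2)$ of commuting unitary representations of $G$ on one Hilbert space $\mathcal{H}$. Since $\pi_1$ and $\pi_2$ commute, $\sigma:=\pi_1\pi_2$ is again a unitary representation of $G$, and one checks that the central vectors of the bimodule are precisely the $\sigma(G)$-invariant vectors, while the vectors central for the $C^*(H)$-bimodule obtained by restriction are precisely the $\sigma(H)$-invariant ones. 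Thus ``the pair $(C^*(G),C^*(H))$ has Property $(T)$'' translates into the statement that there exist a finite $\mathcal{F}\subseteq C^*(G)$ and $\epsilon>0$ such that every pair $(\pi_1,\pi_2)$ of commuting representations of $G$ admitting a unit vector which is ``$(\mathcal{F},\epsilon)$-central'' in the precise sense of Section~\ref{S1} admits a non-zero $\sigma(H)$-invariant vector.

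For ``$(G,H)$ has Property $(T)$ $\Rightarrow$ $(C^*(G),C^*(H))$ has Property $(T)$'' I would run the argument of \cite{Ng-nu-T} in the relative setting: starting from a Kazhdan pair $(Q,\kappa)$ for $(G,H)$, build a finite set $\mathcal{F}\subseteq C_c(G)\subseteq C^*(G)$ --- a compactly supported approximate-unit element together with finitely many translates of it indexed by a finite cover of $Q$ --- and an $\epsilon>0$ such that any ``$(\mathcal{F},\epsilon)$-central'' unit vector of a bimodule is, up to a controlled perturbation, $(Q,\kappa)$-almost invariant for $\sigma=\pi_1\pi_2$; relative Property $(T)$ for $(G,H)$ then supplies a non-zero $\sigma(H)$-invariant, hence $C^*(H)$-central, vector. (With $H=G$ this recovers the implication of \cite{Ng-nu-T} quoted above.)

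The converse is the crux. Fix a Kazhdan pair $(\mathcal{F},\epsilon)$ for $(C^*(G),C^*(H))$, and let $(\pi,\mathcal{H})$ be a unitary representation of $G$ with almost invariant vectors; we must produce a non-zero $\pi(H)$-invariant vector. Consider the commuting pair $(\pi_1,\pi_2):=(\pi,1_{\mathcal{H}})$ on $\mathcal{H}$, with associated non-degenerate $C^*(G)$-bimodule $\mathfrak{K}$: the left action is the integrated form of $\pi$ and the right action sends $f\in C^*(G)$ to the scalar $\int_G f\,d\mu_G$. Here $\sigma=\pi_1\pi_2=\pi$, so the $C^*(H)$-central vectors of $\mathfrak{K}$ are exactly the $\pi(H)$-invariant vectors, and it suffices to exhibit in $\mathfrak{K}$ a unit vector that is ``$(\mathcal{F},\epsilon)$-central'' in the sense of Section~\ref{S1}; applying the hypothesis to $\mathfrak{K}$ then finishes the proof. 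Such a vector is produced by approximating each $f\in\mathcal{F}$ in $C^*$-norm by a compactly supported function, letting $Q$ be the union of the supports, and choosing a $(Q,\delta)$-almost invariant unit vector $\xi$ for $\pi$ with $\delta$ small (depending on the $L^1$-norms of the approximants): a short estimate gives $\|f\cdot\xi-\xi\cdot f\|=\|\pi(f)\xi-(\int_G f)\,\xi\|\le\epsilon$ for $f\in\mathcal{F}$, and the remaining requirements in Section~\ref{S1}'s notion of an almost central vector (essentiality / approximate-unit conditions) can be met for the same $\xi$ because the right action on $\mathfrak{K}$ is scalar. The main obstacle throughout is precisely this matching of Ng's refined notion of an almost central vector with the representation-theoretic notion of an almost invariant vector in the non-unital case ($G$ non-discrete), where the approximate unit of $C^*(G)$ lies outside $C^*(G)$ and the modular function enters the identification $C^*(G)^{\opp}\cong C^*(G)$: in the direction of the previous paragraph it surfaces in the construction of $\mathcal{F}$, and in the converse it surfaces in verifying that the naive bimodule $\mathfrak{K}$ really satisfies all of Section~\ref{S1}'s conditions --- if not, one repairs $\mathfrak{K}$ by tensoring with a fixed auxiliary non-degenerate bimodule, or by passing to a sub-bimodule, chosen so as not to disturb the identity $\sigma=\pi$ up to weak equivalence.
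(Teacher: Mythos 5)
Your overall architecture matches the paper's: part (b) is deduced from part (a) together with \cite[Proposition 4.1(c)]{Ng-nu-T}; the forward implication of (a) is the content of \cite[Proposition 4.1(a)]{Ng-nu-T} (which the paper simply cites rather than re-derives as you propose); and for the converse you build, from a unitary representation $\pi$ of $G$ with almost invariant vectors, the bimodule whose left action is the integrated form of $\pi$ and whose right action is the scalar $f\mapsto\int_G f\,d\mu$, so that $C^*(H)$-central vectors are exactly the $\pi(H)$-invariant vectors. This is precisely the paper's strategy, packaged there as Proposition~\ref{prop:prop-T>isolate} applied to the trivial character.

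There is, however, a genuine gap at the crux. The definition of Property $(T)$ in Section~\ref{S1} requires an \emph{almost-$\KK_A$-central net}: the commutators $\|x\cdot\xi_i-\xi_i\cdot x\|$ must tend to $0$ \emph{uniformly over every strictly compact subset of $M(C^*(G))$} --- in particular over $\{\delta_s:s\in Q\}$ for compact $Q\subseteq G$, and over far more general subsets of the multiplier algebra, none of which lie in $C^*(G)$ when $G$ is non-discrete. You instead posit a finite Kazhdan pair $(\mathcal{F},\epsilon)$ with $\mathcal{F}\subseteq C^*(G)$ and produce a single $(\mathcal{F},\epsilon)$-central unit vector; but the equivalence of Ng's net/strictly-compact definition with such a finite-set reformulation is exactly the non-trivial point, and you assert it without proof (your closing suggestion to ``repair $\mathfrak{K}$ by tensoring with an auxiliary bimodule or passing to a sub-bimodule'' is not an argument). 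The paper closes this gap in Proposition~\ref{prop:prop-T>isolate}: from the net $(\xi_i)$ witnessing weak containment of $\chi$ --- which only gives pointwise control over $a\in A$ --- one fixes $a_0\in A$ with $|\chi(a_0)|=1$ and normalizes $\eta_i=\pi(a_0)\xi_i/\|\pi(a_0)\xi_i\|$; then $x\cdot\eta_i-\eta_i\cdot x$ involves only $\pi(xa_0)$ with $xa_0\in A$, and Lemma~\ref{lem:st-cpt} (norm-compactness of $Ka_0$ for $K$ strictly compact) upgrades pointwise convergence to the required uniform convergence over $K$. Without this normalization step, or some substitute for it, your construction does not yield an almost-$\KK_{C^*(G)}$-central net, so the hypothesis of Property $(T)$ for the pair of $C^*$-algebras cannot be invoked.
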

The proof of Theorem~\ref{thm:full} is based on   an isolation property of 
one dimensional representations (see Proposition~\ref{prop:prop-T>isolate}) of an arbitrary $C^*$-algebra  with Property $(T)$, which is of independent interest.
This isolation property also allows us 
 to give a sufficient condition for a locally compact quantum group to  satisfy Property $(T)$ (see, e.g., \cite{CN}).
Notice that we do not know whether this sufficient condition is also a necessary condition.

\begin{thm}\label{thm:full-QG}
If $\BG$ is a locally compact quantum group such that its full group  $C^*$-algebra $C_0^\ru(\widehat{ \BG})$ has Property $(T)$, then $\BG$ has property $(T)$.
\end{thm}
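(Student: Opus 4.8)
The plan is to deduce Property $(T)$ for $\BG$ from the isolation of one-dimensional representations established in Proposition~\ref{prop:prop-T>isolate}, by transporting everything through the universal property of the $C^*$-algebra $A := C_0^\ru(\widehat{\BG})$.

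The first step is to set up the standard dictionary between unitary representations of $\BG$ and nondegenerate $*$-representations of $A$. By Kustermans' ``universal setting'' for locally compact quantum groups, these two classes are in a natural bijective correspondence $U \leftrightarrow \pi_U$; under it, the trivial representation $\epsilon_{\BG}$ of $\BG$ (the unitary $U = 1$) corresponds to the universal co-unit $\widehat{\epsilon}^{\,\ru}$ of $\widehat{\BG}$, which is a one-dimensional $*$-representation of $A$. The key compatibility to record is that a unitary representation $U$ of $\BG$ has almost invariant vectors if and only if $\widehat{\epsilon}^{\,\ru}$ is weakly contained in $\pi_U$, and $U$ has a non-zero invariant vector if and only if $\widehat{\epsilon}^{\,\ru}$ is contained in $\pi_U$ as a subrepresentation. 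Both equivalences come from unravelling, for $\pi_U$, what it means for a vector (respectively, a net of unit vectors) to be invariant (respectively, almost invariant) under $U$, and I would simply invoke the existing literature on Property $(T)$ for locally compact quantum groups (see, e.g., \cite{CN}) for this.

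With the dictionary in place, I would recall that $\BG$ has Property $(T)$ precisely when every unitary representation of $\BG$ with almost invariant vectors has a non-zero invariant vector. By the previous step this is equivalent to the following statement about $A$: for every nondegenerate $*$-representation $\pi$ of $A$, weak containment of $\widehat{\epsilon}^{\,\ru}$ in $\pi$ implies that $\widehat{\epsilon}^{\,\ru}$ is a subrepresentation of $\pi$ --- in other words, to the assertion that the one-dimensional representation $\widehat{\epsilon}^{\,\ru}$ of $A$ is isolated. Since $A = C_0^\ru(\widehat{\BG})$ has Property $(T)$ by hypothesis, Proposition~\ref{prop:prop-T>isolate} applies and shows exactly that every one-dimensional representation of $A$ --- in particular $\widehat{\epsilon}^{\,\ru}$ --- is isolated; the standard fact that an open point of the spectrum of a $C^*$-algebra is weakly contained in a representation only when it is actually contained in it then upgrades this to the required implication. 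Combining, $\BG$ has Property $(T)$.

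The step I expect to require the most care is the precise matching in the second paragraph: one has to be sure that ``almost invariant/invariant vectors for the quantum-group representation $U$'' corresponds exactly to ``weak containment/containment of the character $\widehat{\epsilon}^{\,\ru}$ in $\pi_U$'', and that the flavour of ``isolated'' delivered by Proposition~\ref{prop:prop-T>isolate} is the one (weak containment implies containment) needed to characterise Property $(T)$ for $\BG$ --- i.e.\ that Proposition~\ref{prop:prop-T>isolate} really makes $\{\widehat{\epsilon}^{\,\ru}\}$ open in the spectrum of $A$. Granting these, the remainder is a routine application of the universal property of $C_0^\ru(\widehat{\BG})$ and I would not anticipate further obstacles; one should, however, double-check whether any second-countability hypothesis is needed for the chosen definition of Property $(T)$ of $\BG$ to coincide with the isolation-of-the-co-unit formulation.
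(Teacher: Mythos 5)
Your proposal is correct and follows essentially the same route as the paper, whose proof is simply the combination of Proposition~\ref{prop:prop-T>isolate}(b) with the quantum-group dictionary of \cite[Proposition 3.2]{CN} that you reconstruct in your second paragraph. The only cosmetic difference is that you pass through ``isolated point plus the standard open-point fact,'' whereas Proposition~\ref{prop:prop-T>isolate}(a) with $B=A$ already gives the needed ``weak containment of $\chi$ implies containment'' directly.
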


It is natural to ask whether Theorem \ref{thm:full} remains true when $C^*(G)$ is replaced by $C^*_r(G)$. 
As mentioned above, this is the case when $G$ is discrete.  This also holds when $G$ is amenable: indeed, in this case,
 $C^*_r(G)\cong C^*(G)$ and Theorem \ref{thm:full} shows that $C^*_r(G)$ has Property $(T)$ if and only if $G$ has Property $(T)$, that is (see Theorem 1.1.6 in \cite{BHV}), if and only if $G$ is compact.

However, as we now see, Property $(T)$ of the reduced group $C^*$-algebra does not imply Property $(T)$ of the original group, for a large class of  locally compact groups related to nuclear  $C^*$-algebras (see e.g. \cite{BrownOzawa}).

 \begin{thm}
\label{thm:2}
Let $G$ be a  locally compact group such that  $C^*_r(G)$ is a nuclear $C^*$-algebra.  
 If $G$ is non-amenable, then $C^*_r(G)$ has strong Property $(T).$ 
 \end{thm}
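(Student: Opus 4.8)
The plan is to produce a Kazhdan pair $(F,\varepsilon)$ for $C^*_r(G)$ — in the sense of the definition of (strong) Property $(T)$ from \cite{Ng-nu-T} — for which \emph{no} $C^*_r(G)$-bimodule carries an $(F,\varepsilon)$-almost-central unit vector at all; the implications defining Property $(T)$ and its strong version are then satisfied vacuously. Note that a non-amenable $G$ is automatically non-compact, that $C^*_r(G)$ has no nonzero finite-dimensional (in particular no one-dimensional) representation, and — as the argument below will show — that no $C^*_r(G)$-bimodule has a nonzero central vector; so we are genuinely in the ``vacuous'' regime, and the only thing to do is to find the pair $(F,\varepsilon)$ witnessing it.

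First I would use nuclearity to describe the bimodules. Since $C^*_r(G)$ is nuclear, $C^*_r(G)\otimes_{\max}C^*_r(G)^{\opp}=C^*_r(G)\otimes_{\min}C^*_r(G)^{\opp}$; combining this with the $*$-isomorphism $C^*_r(G)^{\opp}\cong C^*_r(G)$ induced by $g\mapsto g^{-1}$ and with $C^*_r(G)\otimes_{\min}C^*_r(G)\cong C^*_r(G\times G)$, one sees that every (nondegenerate) $C^*_r(G)$-bimodule $\CH$ underlies a unitary representation $\sigma$ of $G\times G$ weakly contained in the regular representation $\lambda_{G\times G}=\lambda_G\otimes\lambda_G$, the left and right module actions being carried by $\sigma|_{G\times\{e\}}$ and (the $g\mapsto g^{-1}$ twist of) $\sigma|_{\{e\}\times G}$. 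Unwinding the identifications, a unit vector $\xi\in\CH$ is central for the bimodule precisely when it is fixed by the diagonal $\Delta=\{(g,g):g\in G\}\cong G$. Restricting $\sigma$ to $\Delta$ and invoking Fell's absorption principle $\lambda_G\otimes\lambda_G\cong\lambda_G\otimes 1$, we get $\sigma|_\Delta\prec\lambda_G$: seen through the diagonal, the $G$-representation on \emph{any} $C^*_r(G)$-bimodule is weakly contained in $\lambda_G$.

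Next I would extract a uniform spectral gap from non-amenability. Recall that $G$ is non-amenable if and only if there is $h\in C_c(G)$ with $h=h^*\geq 0$, $\int_G h=1$ and $\|\lambda_G(h)\|=1-\delta<1$. For any $\pi\prec\lambda_G$ one has $\|\pi(h)\|\leq\|\lambda_G(h)\|=1-\delta$, so a unit vector $\eta$ with $\sup_{g\in\supp h}\|\pi(g)\eta-\eta\|<\delta$ would force $1-\delta<\|\pi(h)\eta\|\leq 1-\delta$, which is absurd. Applying this with $\pi=\sigma|_\Delta$ shows that no unit vector of any $C^*_r(G)$-bimodule is $(\supp h,\delta)$-almost fixed by $\Delta$ (and, taking $\delta=0$, that no bimodule has a nonzero central vector). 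Now take as Kazhdan set $F$ the unitaries $\lambda_G(g)$, $g\in\supp h$, sitting in $M(C^*_r(G))$ (or a finite/compact subset thereof, or a finite subset of $C^*_r(G)$, according to the exact formulation of \cite{Ng-nu-T}), together with $\varepsilon=\delta/2$: by the translation between approximately central vectors for $C^*_r(G)$ and approximately invariant vectors for $G$, an $(F,\varepsilon)$-almost-central unit vector of a bimodule would be a $(\supp h,\delta)$-almost-$\Delta$-fixed vector, which cannot exist. Hence $(F,\varepsilon)$ is a Kazhdan pair admitting no almost-central unit vectors, so $C^*_r(G)$ has strong Property $(T)$ (a fortiori Property $(T)$).

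The group-representation core — nuclearity to identify the bimodules, Fell absorption to push the weak containment down to the diagonal, and the $\|\lambda_G(h)\|<1$ characterization of non-amenability — is short and robust. The main obstacle I expect is bookkeeping rather than substance: matching the above with the precise definition of (strong) Property $(T)$ for the \emph{non-unital} $C^*$-algebra $C^*_r(G)$ in \cite{Ng-nu-T} — what counts as an ``almost-central unit vector'' (nondegeneracy of the bimodule; whether Kazhdan sets are finite subsets of $C^*_r(G)$ or compact subsets of $M(C^*_r(G))$; the quantitative content hidden in the word ``strong'') — and carrying out cleanly the passage between ``$\|a\cdot\xi-\xi\cdot a\|$ small for the chosen elements $a$'' and ``$\|\sigma(g,g)\xi-\xi\|$ small for $g$ in a compact set''. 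This translation between the group and $C^*$-algebraic pictures is routine, but it has to be done with care so that the Kazhdan pair above really lies in the class allowed by the definition.
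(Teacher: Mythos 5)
Your proposal is correct and follows essentially the same route as the paper: nuclearity gives $\otimes_{\max}=\otimes_{\min}$ and hence weak containment of the bimodule representation of $G\times G$ in $\lambda_{G\times G}$, restriction to the diagonal is weakly contained in $\lambda_G$, and non-amenability then rules out any almost-$\KK_{C^*_r(G)}$-central net of unit vectors, so strong Property $(T)$ holds vacuously. The only cosmetic difference is that you phrase the conclusion via an explicit Kazhdan pair and spell out the spectral-gap estimate $\|\lambda_G(h)\|<1$, whereas the paper works directly with the net-based definition and simply cites non-amenability; the substance is identical.
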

\begin{example}
 \label{Exa-NuclearT}
 (i) The class of locally compact groups $G$ for which $C^*_r(G)$ is a nuclear $C^*$-algebra
 includes not just all amenable groups but also all connected groups  as well as  all groups of type $\tI$  (see \cite{Paterson}).
 Observe that a discrete group $G$ has a nuclear reduced group $C^*$-algebra only if $G$ is amenable
 (see  \cite[Theorem 2.6.8]{BrownOzawa}).
 
\noindent
 (ii) Let $G$ be a simple non-compact Lie group of real rank 1, as for instance $SO(n,1)$ for $n\geq 2$ or $SU(n,1)$
for $n\geq 1.$ Then $G$ does not have Property $(T)$. 
However, as $C^*_r(G)$ is nuclear, it follows from Theorem~\ref{thm:2} that $C^*_r(G)$ has strong Property $(T)$.
 \end{example}
 
 In combination with a result from  \cite{Ng-str-amen}, Property (T) for  $C^*_r(G)$ admits the 
following characterization when  $G$ is a non-compact group. 
\begin{cor}
\label{cor-thm:2}
Let $G$ be a non-compact locally compact group such that  $C^*_r(G)$ is a nuclear $C^*$-algebra.
 The following statements are equivalent. 
	\begin{enumerate}
		\item $G$ is non-amenable;
		\item $C^*_r(G)$ admits no tracial state;
		\item $C^*_r(G)$ has Property $(T)$ (equivalently, strong Property $(T)$).
	\end{enumerate}
\end{cor}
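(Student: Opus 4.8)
The plan is to deduce the statement from results already in hand by splitting it into the two equivalences $(1)\Leftrightarrow(3)$ and $(1)\Leftrightarrow(2)$; the content of the first lies in Theorem~\ref{thm:2} and of the second in the cited result of \cite{Ng-str-amen}, so what remains is only to dispose of the amenable case. For $(1)\Rightarrow(3)$ I would simply invoke Theorem~\ref{thm:2}: if $G$ is non-amenable and $C^*_r(G)$ is nuclear, then $C^*_r(G)$ has strong Property~$(T)$, a fortiori Property~$(T)$ (non-compactness playing no role here). For $(3)\Rightarrow(1)$ I would argue by contradiction: if $G$ were amenable, the canonical map $C^*(G)\to C^*_r(G)$ would be an isomorphism, so Property~$(T)$ of $C^*_r(G)$ would pass to $C^*(G)$ and hence, by Theorem~\ref{thm:full}(b), to $G$; but an amenable group with Property~$(T)$ is compact (Theorem~1.1.6 in \cite{BHV}), contradicting the standing hypothesis that $G$ is non-compact. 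It is worth recording along the way that under $(1)$ the algebra $C^*_r(G)$ is non-unital --- a discrete group with nuclear reduced $C^*$-algebra is amenable by \cite[Theorem~2.6.8]{BrownOzawa}, so $G$ must be non-discrete and $C^*_r(G)$ non-unital by \cite{Milnes} --- although this is not strictly needed for the argument.

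For $(1)\Leftrightarrow(2)$, one direction is elementary: if $G$ is amenable then $1_G\prec\lambda_G$, so the trivial character of $C^*(G)$ factors through a character $\tau_0$ of $C^*_r(G)$, and a character of any $C^*$-algebra is automatically a state and is trivially tracial; hence $G$ amenable implies $C^*_r(G)$ has a tracial state, i.e. $(2)\Rightarrow(1)$. The opposite implication $(1)\Rightarrow(2)$ --- that, for $G$ non-amenable with $C^*_r(G)$ nuclear and $G$ non-compact, $C^*_r(G)$ carries no tracial state at all --- is precisely the input I would draw from \cite{Ng-str-amen}; combined with the previous sentence this gives $(1)\Leftrightarrow(2)$ and completes the proof.

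The one genuine obstacle is this last implication, and I expect no way around citing \cite{Ng-str-amen} for it: it is really a statement about the \emph{reduced} algebra and cannot be extracted from Property~$(T)$ together with nuclearity alone, since $C^*(SL_3(\BR))$ is a non-unital nuclear $C^*$-algebra with strong Property~$(T)$ that nevertheless admits the trivial character as a tracial state. What makes $C^*_r(G)$ traceless is rather the group-theoretic fact that non-amenability prevents the trivial representation from being tempered (equivalently, from being weakly contained in $\lambda_G$), which is the mechanism underlying the \cite{Ng-str-amen} result; assembling that with Theorem~\ref{thm:2} and the amenable-case contradiction above is then routine.
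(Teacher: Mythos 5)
Your proposal is correct and follows essentially the same route as the paper: $(1)\Rightarrow(3)$ via Theorem~\ref{thm:2}, $(3)\Rightarrow(1)$ by reducing the amenable case to Theorem~\ref{thm:full} through $C^*(G)\cong C^*_r(G)$ and using non-compactness, and $(1)\Leftrightarrow(2)$ via \cite[Theorem 8]{Ng-str-amen}. The only difference is that you supply an elementary argument for the easy direction of $(1)\Leftrightarrow(2)$ (amenability yields a tracial character on $C^*_r(G)$), where the paper simply cites the reference for both directions.
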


Let $G$ be a non-amenable locally compact group  with a nuclear reduced $C^*$-algebra, as in Theorem~\ref{thm:2}. 
The proof that $C^*_r(G)$ has strong Property $(T)$ 
relies simply on  the fact  that there is no non-degenerate Hilbert $^*$-bimodule over $C^*_r(G)$ with approximate central vectors.

Apart from amenable groups, the only groups for which we are able to construct appropriate non-degenerate Hilbert $^*$-bimodules over $C^*_r(G)$  with approximate central vectors are the so-called IN-groups. 
Recall that $G$ is said to be an \emph{IN-group}  if  there exists  a compact neighborhood of the identity
in $G$ which is invariant under conjugation (for more detail on these groups, see \cite{Palmer}).
For this class of groups, we indeed do have a positive result concerning the relation between Property $(T)$ of $G$ and strong Property $(T)$ of $C^*_r(G)$.

 \begin{thm}
 \label{thm:main}
Let $G$ be an IN-group and $H\subseteq G$  a closed subgroup. 

\noindent
(a) If $\big(C^*_r(G), C^*_r(H)\big)$ has strong Property $(T)$, then $(G, H)$ has Property $(T)$. 

\noindent
(b) In the case where $G$ is $\sigma$-compact, the pair $(G, H)$ has Property $(T)$ if and only if $\big(C^*_r(G), C^*_r(H)\big)$ has strong Property $(T)$. 

\noindent
(c) $G$ has Property $(T)$ if and only if $C^*_r(G)$ has strong Property $(T)$. 
\end{thm}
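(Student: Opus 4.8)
The plan is to derive (b) and (c) from part (a) together with its comparatively soft converse, so that (a) carries the real content. A group with Property $(T)$ is compactly generated, hence $\sigma$-compact, and ``$A$ has strong Property $(T)$'' is the diagonal ($H=G$) instance of the relative notion, so (c) is (b) with $H=G$; and (b) is (a) together with the implication ``$(G,H)$ has Property $(T)$ $\Rightarrow$ $(C^*_r(G),C^*_r(H))$ has strong Property $(T)$''. For the latter I would argue directly, rather than through the quotient map $C^*(G)\to C^*_r(G)$ and Theorem~\ref{thm:full} — which cannot suffice, since strong Property $(T)$ of $C^*_r(G)$ is genuinely weaker (Theorem~\ref{thm:2}): on a non-degenerate bimodule over $C^*_r(G)$ with a net of approximately central unit vectors, the two commuting actions are non-degenerate $^*$-representations of $C^*_r(G)$ and of $C^*_r(G)^{\opp}$, hence integrate to commuting unitary representations $U,V$ of $G$ weakly contained in $\lambda_G$; the representation $g\mapsto U(g)V(g)^{-1}$ then has the given net as almost invariant vectors and has the $C^*_r(H)$-central vectors of the bimodule as its $H$-fixed vectors, so relative Property $(T)$ of $(G,H)$ produces a nonzero such vector, $\sigma$-compactness being used to organise nets into sequences and to recover the quantitative (``strong'') statement.

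For (a) I argue by contraposition. If $(G,H)$ lacks Property $(T)$, fix a unitary representation $(\pi,\CH_\pi)$ of $G$ admitting a net $(v_i)$ of almost invariant unit vectors but no nonzero $H$-invariant vector, and attach to it the bimodule $\mathcal{M}_\pi:=L^2(G)\otimes\CH_\pi$ over $C^*_r(G)$ with left action the integrated form of $\lambda_G\otimes\pi$ and right action the integrated form of $x\mapsto\rho_G(x^{-1})\otimes\id$. By Fell's absorption principle $\lambda_G\otimes\pi$ is unitarily equivalent to a multiple of $\lambda_G$, while $\rho_G\otimes\id$ is plainly a multiple of $\rho_G\cong\lambda_G$; hence both actions are non-degenerate $^*$-representations of $C^*_r(G)$, resp.\ of $C^*_r(G)^{\opp}$, and they commute because $\lambda_G$ and $\rho_G$ do. Since $\rho_G|_H$ is weakly equivalent to $\rho_H$, the right action restricts to a $^*$-representation of $C^*_r(H)^{\opp}$, so $\mathcal{M}_\pi$ is a bimodule over the pair $(C^*_r(G),C^*_r(H))$.

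The IN-hypothesis is exactly what manufactures approximately central vectors. Choose a compact conjugation-invariant neighbourhood $W$ of $e$ (IN-groups are unimodular) and set $\eta:=\mu(W)^{-1/2}\mathbf 1_W\in L^2(G)$; conjugation-invariance of $\mathbf 1_W$ gives $\lambda_G(x)\eta=\rho_G(x^{-1})\eta$ for every $x\in G$, i.e.\ $\eta$ is a genuine (not merely approximate) central vector of the standard $C^*_r(G)$-bimodule $L^2(G)$ — the distinguishing feature of IN-groups being precisely that $C^*_r(G)$ then carries a tracial state. With $\xi_i:=\eta\otimes v_i$, evaluating the right action via this centrality gives $\xi_i\cdot f=(\lambda_G(f)\eta)\otimes v_i$ for $f\in C_c(G)$, whereas $f\cdot\xi_i=\int f(x)(\lambda_G(x)\eta)\otimes(\pi(x)v_i)\,dx$, so that
\[
\|f\cdot\xi_i-\xi_i\cdot f\|\ \leq\ \|f\|_1\,\sup_{x\in\supp f}\|\pi(x)v_i-v_i\|\ \longrightarrow\ 0
\]
along the net, with a modulus of approximation depending only on the index and on $\|f\|_1$, as the ``strong'' formulation demands. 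Thus $(\xi_i)$ is a net of approximately central unit vectors for $\mathcal{M}_\pi$, and strong Property $(T)$ of $(C^*_r(G),C^*_r(H))$ yields a nonzero $C^*_r(H)$-central vector $F\in L^2(G,\CH_\pi)$ which, crucially, may be taken within a fixed small distance of $\xi_i=\eta\otimes v_i$ for $i$ large.

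It remains to convert $F$ into a nonzero $H$-invariant vector of $\pi$. Unwinding $C^*_r(H)$-centrality (using continuity of the unitary actions of $H$ on $\mathcal{M}_\pi$) gives $\pi(h)F(h^{-1}th)=F(t)$ for all $h\in H$ and almost every $t\in G$; hence the average $w:=\int_W F(t)\,dt\in\CH_\pi$ satisfies $\pi(h)w=\int_{hWh^{-1}}F=\int_W F=w$ (using $hWh^{-1}=W$ and unimodularity), so $w$ is $H$-invariant. Moreover $w\neq 0$: as $\eta$ is supported in $W$ one has $\int_W(\eta\otimes v_i)=\mu(W)^{1/2}v_i$, of norm $\mu(W)^{1/2}$, and $\|w-\mu(W)^{1/2}v_i\|=\|\int_W(F-\xi_i)\|\leq\mu(W)^{1/2}\|F-\xi_i\|$ is small for $i$ large, so $w$ cannot vanish — contradicting the choice of $\pi$, which proves (a). The step that calls for the most care is the interface with \cite{Ng-nu-T}: one must verify that $\mathcal{M}_\pi$ (and the bimodules used in the soft converse) are ``non-degenerate Hilbert $^*$-bimodules'' in the refined sense adopted there, and that the ``strong'' conclusion genuinely furnishes a $C^*_r(H)$-central vector close to a prescribed approximately central one. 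The one place where the IN-hypothesis is truly indispensable is the existence of the central vector $\eta$, equivalently of a trace on $C^*_r(G)$.
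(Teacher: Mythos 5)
Your construction for part (a) is, up to conjugation by the Fell unitary, exactly the bimodule of Lemma~\ref{lem:rep-and-inv-vect}, your averaging step $w=\int_W F(t)\,dt$ is the paper's concluding argument, and your treatment of (b) and (c) merely replaces the citation of \cite[Proposition 4.1(b),(c)]{Ng-nu-T} by a direct sketch. But there is one genuine gap in (a): you verify only that $(\eta\otimes v_i)$ is almost-$A$-central, i.e.\ that $\|f\cdot\xi_i-\xi_i\cdot f\|\to 0$ for each fixed $f\in C_c(G)$ (hence for each element of $C^*_r(G)$). The definition of (strong) Property $(T)$ adopted in this paper requires the net to be almost-$\KK_A$-central: $\sup_{x\in K}\|x\cdot\xi_i-\xi_i\cdot x\|\to 0$ for every \emph{strictly compact} subset $K$ of the multiplier algebra $M(C^*_r(G))$. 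Without this, the hypothesis that $\big(C^*_r(G),C^*_r(H)\big)$ has strong Property $(T)$ simply does not apply to your net, and the modulus you invoke, ``depending only on the index and on $\|f\|_1$'', says nothing about multipliers that are not in $L^1(G)$. (The concern you do flag, about non-degeneracy of the bimodule, is not where the difficulty lies.)

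The paper closes exactly this gap with a device that your choice of vectors does not support. It works with $\xi_i\otimes h$, where $h=\chi_V\ast\chi_V/\kappa$, so that $\xi_i\otimes h=\chi_V\cdot(\xi_i\otimes\chi_V)/\kappa$ lies in the range of left multiplication by the fixed central element $\chi_V\in C^*_r(G)$; for $y$ in a strictly compact set $K$ one then has $y\cdot(\xi_i\otimes h)=(y\chi_V)\cdot(\xi_i\otimes\chi_V)/\kappa$ with $y\chi_V$ ranging over the norm-compact set $K\chi_V\subseteq C^*_r(G)$, Lemma~\ref{lem:st-cpt}(b) reduces the supremum over $K$ to finitely many such elements, and the element-wise estimate applies to those; centrality of $\chi_V$ gives the matching identity on the right. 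Your vector $\mu(W)^{-1/2}\mathbf 1_W\otimes v_i$ is not of the form $a_0\cdot\zeta_i$ for a fixed $a_0\in C^*_r(G)$ and bounded $\zeta_i$, so this upgrade is not available as written; replacing $\mathbf 1_W$ by the normalized convolution $\mathbf 1_W\ast\mathbf 1_W$ and rerunning your estimate repairs the argument and brings it into line with the paper's. Everything else --- the unitary equivalence of your bimodule with the paper's, the extraction of the $H$-invariant vector $w$ via conjugation-invariance of $W$, and the lower bound on $\|w\|$ --- is correct and essentially coincides with the published proof.
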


Finally, we point out that the classes of groups considered in Theorem~\ref{thm:main} (IN-groups)  and in Theorem~\ref{thm:2} (non-amenable groups with a nuclear reduced $C^*$-algebra) are disjoint.
\begin{prop}
\label{Prop-IN-Nuclear}
Let $G$ be an IN-group with a nuclear reduced $C^*$-algebra. Then $G$ is amenable.
\end{prop}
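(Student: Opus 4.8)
The plan is to reduce the proposition to Corollary~\ref{cor-thm:2}, the bridge being the (essentially classical) fact that an IN-group always carries a tracial state on its reduced $C^*$-algebra. So the argument has two parts: first prove ``$G$ IN $\Rightarrow$ $C^*_r(G)$ has a tracial state'', and then feed this into Corollary~\ref{cor-thm:2}.

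For the first part, fix a compact neighbourhood $V$ of the identity with $gVg^{-1}=V$ for all $g\in G$. Comparing Haar measures of $gV$ and $Vg=gV$ shows that $G$ is unimodular, and then $gV=Vg$ yields the identity $\lambda_G(g)\xi=\rho_G(g^{-1})\xi$ for all $g\in G$, where $\xi:=|V|^{-1/2}\mathbf 1_V$ is a unit vector in $L^2(G)$ and $\lambda_G,\rho_G$ are the left and right regular representations. Since $\lambda_G(G)$ and $\rho_G(G)$ commute, this identity gives at once $\langle\lambda_G(gh)\xi,\xi\rangle=\langle\lambda_G(hg)\xi,\xi\rangle$ for all $g,h\in G$; hence the normal vector state $\omega_\xi=\langle\,\cdot\,\xi,\xi\rangle$ is tracial on the $\ast$-algebra generated by $\lambda_G(G)$, and by normality together with weak-$*$ density of that $\ast$-algebra it is a tracial state on the group von Neumann algebra $\lambda_G(G)''$. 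Restricting $\omega_\xi$ to $C^*_r(G)\subseteq\lambda_G(G)''$ gives a tracial positive functional; it has norm one because, by the Kaplansky density theorem, the positive part of the unit ball of $C^*_r(G)$ is weak-$*$ dense in that of $\lambda_G(G)''$ while $\omega_\xi$ takes the value $1$ on the identity. Thus $C^*_r(G)$ admits a tracial state. (This step is where I expect the only real care is needed: because $C^*_r(G)$ is non-unital for non-discrete $G$, one cannot simply restrict the von Neumann trace and declare it a state, and the Kaplansky density argument is exactly what rescues the norm-one property.)

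With that in hand the proof finishes immediately. If $G$ is compact it is amenable and there is nothing to prove, so assume $G$ is non-compact. Then $C^*_r(G)$ is nuclear by hypothesis and, by the previous paragraph, admits a tracial state; hence statement~(2) of Corollary~\ref{cor-thm:2} fails, and by the equivalence $(1)\Leftrightarrow(2)$ of that corollary statement~(1) fails as well, i.e.\ $G$ is amenable. Apart from the Kaplansky step flagged above, every ingredient here is routine: unimodularity of IN-groups, the identity $\lambda_G(g)\xi=\rho_G(g^{-1})\xi$, and the standard passage from traciality on a generating $\ast$-subalgebra to traciality on its weak-$*$ closure.
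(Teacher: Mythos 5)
Your proof is correct and follows essentially the same route as the paper: both arguments produce a tracial state on $C^*_r(G)$ from the normalized characteristic function of a conjugation-invariant compact neighbourhood $V$ and then conclude via Corollary~\ref{cor-thm:2}. The only (cosmetic) difference is that the paper verifies traciality of the vector state directly from the fact that $\chi_V$ is central in $L^1(G)$, hence a central vector of the bimodule $L^2(G)$ of Lemma~\ref{lem:rep-and-inv-vect}, whereas you pass through the group von Neumann algebra and Kaplansky density.
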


This paper is organized as follows. In Sections~\ref{S1}, \ref{S2} and \ref{S3}, we  recall some  preliminary facts
and  establish some  tools which are crucial for the proofs of our results.   In Section 6, we 
conclude the proofs  of  Theorems~\ref{thm:full}, \ref{thm:full-QG}, \ref{thm:2} and \ref{thm:main},
as well as Corollary~\ref{cor-thm:2} and Proposition~\ref{Prop-IN-Nuclear}.

\section{Property $(T)$ for non-unital $C^*$-algebras}
\label{S1}
We recall from \cite{Ng-nu-T} the notion of Property $(T)$ and of strong Property $(T)$ for pairs (or inclusions) of general $C^*$-algebras. 

Let $A$ be a (not necessarily unital) $C^*$-algebra and  let $M(A)$ denote its mutiplier algebra. 
Let $\KH$ be a non-degenerate Hilbert $^*$-bimodule over $A$. 
Observe that the associated $^*$-representation and $^*$-anti-representation of $A$ (on $\KH$) uniquely extend to a $^*$-representation and a $^*$-anti-representation of $M(A)$ on the same space.

A net $(\xi_i)_{i\in \KI}$ in $\KH$ is said to be 
\begin{itemize}
	\item \emph{almost-$A$-central} if 
	$$\| a \cdot \xi_i - \xi_i \cdot a\| \to_{i} 0\qquad \text{ for every } a\in A;$$

	\item \emph{almost-$\KK_A$-central} if 
$$\sup_{x\in L} \| x \cdot \xi_i - \xi_i \cdot x\| \to_{i} 0$$ for every subset $L$ of $M(A)$ which is compact for the \emph{strict topology} on $M(A)$; recall that this is the weakest topology on $M(A)$ for which the maps $x\mapsto x a$ and $x\mapsto ax$ from $M(A)$ to $A$ are continuous for every $a\in A$, when  $A$ is equipped with the norm topology. 
\end{itemize}

Let $B$ be a $C^*$-subalgebra of $M(A)$ which is \emph{non-degenerate}, in the sense that an approximate identity of $B$ converges strictly to the identity of $M(A)$. 
The pair  $(A,B)$ is said to have \emph{Property $(T)$} (respectively, \emph{strong Property $(T)$}) if for every non-degenerate Hilbert $^*$-bimodule $\KH$ over $A$ with an almost-$\KK_A$-central net $(\xi_i)_{i\in \KI}$ of unit vectors, 
the space  $$\KH^B:=\{\xi\in \KH: b\cdot \xi = \xi \cdot b \quad \text{ for every }\quad b\in B \}$$ 
of \emph{central vectors} is non-zero (respectively, one has $\|\xi_i - P^B \xi_i\|\to_{i} 0$, where $P^B$ is the orthogonal projection onto the closed subspace $\KH^B$). 
 
 We say that $A$ has \emph{Property $(T)$} if $(A,A)$ has Property $(T)$ and that  $A$ has  \emph{strong Property $(T)$} 
if $(A,A)$ has strong Property $(T)$.

Notice that, compared to the original definition of Property $(T)$ for unital $C^*$-algebras in \cite{B}, 
we use here almost-$\KK_A$-central nets of unit vectors  instead of almost-$A$-central ones.
Taking $a_0=1$ in Item (b) of the following elementary lemma,  we see that these two definitions coincide in the unital case.
This elementary lemma will also be needed later on. 

\begin{lem}\label{lem:st-cpt}
Let $K$ be a non-empty strictly compact subset of $M(A).$

\noindent
(a) The set $K$ is norm-bounded. 

\noindent
(b) For any $a_0\in A$ and $\epsilon > 0$, one can find $x_1,\dots, x_n\in K$ such that for every $x\in K$, there exists $k\in \{1,\dots, n\}$ with $\|xa_0 - x_ka_0\| < \epsilon$. 
\end{lem}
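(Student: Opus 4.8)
The plan is to prove (a) first and then deduce (b) by a standard compactness/covering argument. For part (a), I would argue by contradiction: suppose $K$ is not norm-bounded, so there is a sequence $(x_n)$ in $K$ with $\|x_n\|\to\infty$. The key point is that strict compactness of $K$ lets me pass to a subnet $(x_{n_j})$ converging strictly to some $x\in K$; in particular $x_{n_j} a \to xa$ in norm for every fixed $a\in A$, so the family $\{x_{n_j}a : j\}$ is norm-bounded for each $a$. Now I would invoke the uniform boundedness principle: viewing each $x_{n_j}$ as a bounded operator on the Banach space $A$ (acting by left multiplication $a\mapsto x_{n_j}a$, which has operator norm exactly $\|x_{n_j}\|$ since $A$ is a $C^*$-algebra, using an approximate identity), the pointwise-boundedness just established forces $\sup_j\|x_{n_j}\|<\infty$, contradicting $\|x_n\|\to\infty$. (One must be mildly careful that uniform boundedness is a statement about sequences/countable families, but extracting a countable unbounded subsequence at the outset makes this harmless.)

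For part (b), fix $a_0\in A$ and $\epsilon>0$. Consider the map $\Phi\colon M(A)\to A$, $\Phi(x)=xa_0$, which is continuous from the strict topology to the norm topology by the very definition of the strict topology. Hence $\Phi(K)$ is a norm-compact subset of $A$, so it is totally bounded: there exist $x_1,\dots,x_n\in K$ such that the balls $\{B(\Phi(x_k),\epsilon)\}_{k=1}^n$ cover $\Phi(K)$. Unwinding, for every $x\in K$ there is $k$ with $\|xa_0-x_ka_0\|<\epsilon$, which is exactly the assertion.

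The only genuinely substantive step is part (a) — specifically, realizing that one should transport the problem from $M(A)$ into the Banach algebra $B(A)$ of bounded operators on $A$ (via left multiplication) in order to apply uniform boundedness, together with the observation that $\|L_x\|_{B(A)}=\|x\|_{M(A)}$. Everything else is bookkeeping: part (b) is just "continuous image of a compact set is compact, hence totally bounded." I would expect the write-up of (a) to be a few lines and (b) to be two lines.
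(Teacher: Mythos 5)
Your overall strategy coincides with the paper's: part (a) is the uniform boundedness principle applied to the left-multiplication operators $L_x\colon a\mapsto xa$ on $A$ (with $\|L_x\|=\|x\|$ for $x\in M(A)$), and part (b) is exactly the paper's observation that $Ka_0$ is norm-compact, being the image of $K$ under the strictly-to-norm continuous map $x\mapsto xa_0$, hence totally bounded. Part (b) as you wrote it is fine.

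One step in your part (a) is invalid as stated: from the strict convergence of a subnet $x_{n_j}\to x$ you infer that $\{x_{n_j}a : j\}$ is norm-bounded for each $a$. A norm-convergent \emph{net} need not be bounded; only the tail beyond some index $j_0$ is controlled, and for a subnet of your sequence the indices with $j\not\geq j_0$ may still sweep out an unbounded collection of the $x_n a$'s. Since the pointwise boundedness of $\{x_na\}$ is precisely what you are trying to establish, the argument is circular at that point. The repair is immediate and removes the need for both the contradiction and the subnet: for each fixed $a\in A$, the set $Ka$ is norm-compact (the same continuity you invoke in (b)), hence $\sup_{x\in K}\|xa\|<\infty$; Banach--Steinhaus, which holds for arbitrary pointwise-bounded families and not just countable ones (so your parenthetical worry is unnecessary), applied to $\{L_x : x\in K\}$ then gives $\sup_{x\in K}\|x\|=\sup_{x\in K}\|L_x\|<\infty$. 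With that correction your proof is the paper's proof, written out.
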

\begin{proof}
Item (a) follows from the uniform boundedness principle; Item (b) is a consequence 
of the  fact that $Ka_0$ is norm-compact.
\end{proof}

\section{Hilbert $^*$-bimodules over nuclear reduced group $C^*$-algebras}

Let $G$ be a locally compact group.
Recall that, given a unitary representation $(u,  \KH)$ of $G$,  a net 
$(\xi_i)_{i\in \KI}$ of unit vectors in $\KH$ is \emph{almost-invariant} if  
$$\sup_{s\in Q} \| u(s)\xi_i - \xi_i \| \to_{i} 0$$ for every compact subset $Q$ of $G$.

Identifying every element $s\in G$ with the Dirac
measure $\delta_s,$ we view $G$ as a subset of $M(C^*(G))$. 
Notice that this embedding $\delta:s\mapsto \delta_s$ is continuous, when $M(C^*(G))$ is equipped with the strict topology.

Let $\KH$ be a non-degenerate Hilbert $^*$-bimodule over $C^*(G)$. 
Define a map $u_\KH:G\times G\to \mathcal{L}(\KH)$ by  
$$u_\KH(s,t)\xi := \delta_s\cdot \xi \cdot \delta_{t^{-1}} \qquad \text{for all} \quad s,t\in G, \xi\in \KH. $$ 
It is easily checked that  $u_\KH$ is a unitary representation of the cartesian product group $G\times G$. 
Thus, it induces a representation 
$$\tilde u_\KH: C^*(G\times G) \to \CB(\KH).$$ 

As we now see, when $C_r^*(G)$ is nuclear and $\KH$ comes from a $^*$-bimodule over $C^*_r(G)$, the representation $u_\KH$ factors through the regular representation of $G\times G$.

\begin{lem}
 \label{Lem1}
Let $\KH$ be a non-degenerate Hilbert $^*$-bimodule over $C^*(G)$
and $u_\KH$ the associated  unitary representation of $G\times G$. 

\noindent
(a) If $(\xi_i)_{i\in \KI}$ is an almost $\mathfrak{K}_{C^*(G)}$-central net of unit vectors, then  $(\xi_i)_{i\in \KI}$ is almost $u_\KH|_{\Delta(G)}$-invariant, where $\Delta(G):=\{(g,g)\mid g\in G\}$ is the diagonal subgroup of $G\times G$. 

\noindent
(b) Assume that $C_r^*(G)$ is nuclear.
Then $\KH$ is a Hilbert $^*$-bimodule over $C^*_r(G)$ (i.e., the associated $^*$-representation and $^*$-anti-representation of $C^*(G)$ both 
factorize
through $C^*_r(G)$) if and only if $u_\KH$ is weakly contained in the left regular representation $\lambda_{G\times G}$ of $G\times G$. 
\end{lem}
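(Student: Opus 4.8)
The plan is to treat the two parts separately, since part (a) is essentially a translation of definitions and part (b) is where the real content lies. For part (a), I would start from the embedding $\delta\colon G\to M(C^*(G))$, which the text has already observed is continuous for the strict topology, so that for any compact $Q\subseteq G$ the set $\delta(Q)\subseteq M(C^*(G))$ is strictly compact. Unwinding the definition of $u_\KH$, for $(g,g)\in\Delta(G)$ we have $u_\KH(g,g)\xi_i=\delta_g\cdot\xi_i\cdot\delta_{g^{-1}}$, and since $\delta_{g^{-1}}$ is a unitary in $M(C^*(G))$, $\|u_\KH(g,g)\xi_i-\xi_i\|=\|\delta_g\cdot\xi_i-\xi_i\cdot\delta_g\|$. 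Taking the supremum over $g\in Q$ and invoking the almost-$\mathfrak{K}_{C^*(G)}$-centrality of $(\xi_i)$ applied to the strictly compact set $L=\delta(Q)$ gives $\sup_{g\in Q}\|u_\KH(g,g)\xi_i-\xi_i\|\to_i 0$, which is exactly almost-$u_\KH|_{\Delta(G)}$-invariance.

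For part (b), the key observation is that $u_\KH$, being a unitary representation of $G\times G$, integrates to the representation $\tilde u_\KH\colon C^*(G\times G)\to\CB(\KH)$ already introduced, and weak containment in $\lambda_{G\times G}$ is equivalent to $\tilde u_\KH$ factoring through $C^*_r(G\times G)$. On the other hand, the hypothesis ``$\KH$ is a Hilbert $^*$-bimodule over $C^*_r(G)$'' says precisely that the left $^*$-representation $\pi_\ell$ of $C^*(G)$ and the right $^*$-anti-representation $\pi_r$ both factor through $C^*_r(G)$. The bridge between ``$C^*(G\times G)$'' and ``$C^*(G)$ on each side'' is the canonical isomorphism $C^*(G\times G)\cong C^*(G)\otimes_{\max} C^*(G)$ together with the fact — this is where nuclearity of $C^*_r(G)$ enters — that $C^*_r(G\times G)\cong C^*_r(G)\otimes C^*_r(G)$ and, because $C^*_r(G)$ is nuclear, the maximal and minimal tensor norms on $C^*_r(G)\otimes C^*_r(G)$ coincide. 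Concretely, $\tilde u_\KH$ is the representation of $C^*(G)\otimes_{\max}C^*(G)$ determined by the commuting pair $(\pi_\ell,\pi_r^{\opp})$ (the right anti-representation becomes a representation of the opposite algebra, and one should note $C^*(G)^{\opp}\cong C^*(G)$ via $\delta_s\mapsto\delta_s$, since $G\cong G^{\opp}$ via inversion composed with the anti-automorphism, or more directly because $f\mapsto \check f$ implements it). So I would argue: if both $\pi_\ell$ and $\pi_r$ factor through $C^*_r(G)$, then $\tilde u_\KH$ factors through $C^*_r(G)\otimes_{\max}C^*_r(G)=C^*_r(G)\otimes_{\min}C^*_r(G)=C^*_r(G\times G)$, hence $u_\KH\prec\lambda_{G\times G}$. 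Conversely, if $u_\KH\prec\lambda_{G\times G}$, then $\tilde u_\KH$ factors through $C^*_r(G\times G)=C^*_r(G)\otimes C^*_r(G)$; composing with the embeddings $C^*(G)\to M(C^*(G)\otimes C^*(G))$ into the first and second legs and passing to quotients shows each of $\pi_\ell,\pi_r$ factors through $C^*_r(G)$.

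The main obstacle I anticipate is the bookkeeping in the converse direction of (b): one must justify carefully that from the factorization of $\tilde u_\KH$ through the spatial tensor product one can recover a factorization of each one-sided (anti-)representation through $C^*_r(G)$. The clean way is to use that $\pi_\ell(a)=\tilde u_\KH(a\otimes 1)$ and $\pi_r(b)=\tilde u_\KH(1\otimes b)$ — interpreting $a\otimes 1$, $1\otimes b$ in the multiplier algebra $M(C^*(G)\otimes_{\max}C^*(G))$ and using that the inclusions $A\hookrightarrow M(A\otimes_{\max}B)$, $B\hookrightarrow M(A\otimes_{\max}B)$ are isometric and compatible with the quotient to $A_r\otimes B_r$ — so that $\pi_\ell$ factors through the image of $C^*(G)$ in $M(C^*_r(G)\otimes C^*_r(G))$, which is a copy of $C^*_r(G)$. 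A secondary point requiring a line of care is the identification $C^*(G\times G)\cong C^*(G)\otimes_{\max}C^*(G)$ at the level of the representation $u_\KH$, i.e. that $\tilde u_\KH$ really is the integrated form of the commuting pair $(\pi_\ell,\pi_r^{\opp})$; this follows from evaluating both sides on $\delta_{(s,t)}=\delta_s\otimes\delta_t$ and using the defining formula $u_\KH(s,t)\xi=\delta_s\cdot\xi\cdot\delta_{t^{-1}}$ together with density of such elements. Everything else is routine.
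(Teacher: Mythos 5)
Your proposal is correct and follows essentially the same route as the paper: part (a) via strict compactness of $\delta(Q)$ and unitarity of the right action of $\delta_g$, and part (b) via the representation of $C^*_r(G)\otimes_{\rm max}C^*_r(G)^{\opp}$ attached to the bimodule, nuclearity collapsing the maximal onto the minimal tensor product, and the identification with $C^*_r(G\times G)$. Your converse direction (recovering the one-sided factorizations by restricting to the two tensor legs) is just a reformulation of the paper's one-line appeal to the definition of $u_\KH$, i.e., restricting the weak containment to the subgroups $G\times\{e\}$ and $\{e\}\times G$.
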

\begin{proof}
(a) Let $Q$ be  a compact subset of $G$.
Then $Q$ can be viewed as a strictly compact subset of $M(C^*(G))$ because $\delta$ is continuous. 
The claim follows, since  $\delta_s$ is a unitary operator for every $s\in G.$

\noindent
(b) Assume that $\KH$ is a Hilbert $^*$-bimodule over $C^*_r(G)$. 
By the universal property, there is a $^*$-representation $\phi_\KH$ of  the maximal tensor product $C^*_r(G)\otimes_{\rm {max}}C^*_r(G)^\mathrm{op}$ on $\KH$ compatible with the Hilbert $^*$-bimodule structure over $C^*_r(G)$ (here, $C^*_r(G)^\mathrm{op}$ means the opposite $C^*$-algebra of $C^*_r(G)$; see e.g. \cite{Ng-nu-T}). 
However, since $C^*_r(G)$ is nuclear, 
 $C^*_r(G)\otimes_{\rm {max}}C^*_r(G)^\mathrm{op}$ coincides canonically with the minimal tensor product $C^*_r(G)\otimes_{\rm{min}} C^*_r(G)^\mathrm{op}$. 

Now, under the canonical identifications of $C^*_r(G)$ with  $C^*_r(G)^\mathrm{op}$ (which, in the $L^1(G)$ level, sends $f$ to $(\bar f)^*$, with $\bar f$ being the complex conjugate of the function $f$) as well as the identification 
$$C^*_r(G)\otimes_{\rm{min}} C^*_r(G) \cong C^*_r(G\times G),$$
one has $\phi_\KH \circ \lambda = \tilde u_\KH$, where $\lambda$ is the canonical map from $C^*(G\times G)$ to $C^*_r(G\times G)$. 
In other words, $u_\KH$ is weakly contained in $\lambda_{G\times G}$.

Conversely, assume that $u_\KH$ is weakly contained in $\lambda_{G\times G}.$ Then, by definition of $u_\KH$, we see that
the associated $^*$-representation and $^*$-anti-representations of $C^*(G)$ on  $\KH$ factor through $C^*_r(G)$. 
\end{proof}

\section{On  the spectrum of a $C^*$-algebra with Property $(T)$}
\label{S2}
Let $A$ be a (not necessarily unital) $C^*$-algebra.
As said in the above, every non-degenerate $^*$-representation $\pi:A\to \CB(\KH)$ 
of $A$ extends canonically  to a non-degenerate $^*$-representation of $M(A)$, which will again be denoted by $\pi$. 
Moreover,  if $B$ is a non-degenerate $C^*$-subalgebra of $M(A),$  the restriction $\pi|_B$ of $\pi$ to $B$ is a non-degenerated representation of $B$.
Analogous statements are true for a non-degenerate $^*$-anti-representation. 

The following isolation property of  one dimensional $^*$-representations of $A$ is the crucial tool for the proof of 
Theorems~\ref{thm:full} and ~\ref{thm:full-QG}.
Concerning general facts about the topology of the spectrum (or dual space) $\widehat{A}$ of $A$, see Chapter~3 in
\cite{Dixm--C*}. 

\begin{prop}
\label{prop:prop-T>isolate}
Let $A$ be a $C^*$-algebra and $B$ be a non-degenerate $C^*$-subalgebra of $M(A)$. 
Let $\chi:A\to \BC$ be a non-zero $^*$-homomorphism.

\noindent
(a) Suppose that $(A,B)$ has Property $(T)$. 
For any non-degenerate $^*$-representation $(\pi, \KH)$ of $A$ which weakly contains $\chi$, the representation $\chi|_B$ is contained in $(\pi|_B, \KH)$. 

\noindent
(b) If $A$ has Property (T), then $\chi$ an isolated point in the spectrum $\widehat{A}$ of $A$.
\end{prop}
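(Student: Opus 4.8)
The plan is to prove the two parts in order, with part (b) reducing to part (a) together with a standard fact about the hull-kernel topology on $\widehat{A}$.

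For part (a), I would start by unpacking what it means for $(\pi,\KH)$ to weakly contain the one-dimensional representation $\chi$. Since $\chi$ is a character, weak containment gives a net of unit vectors $(\eta_i)$ in $\KH$ such that $\langle \pi(a)\eta_i,\eta_i\rangle \to \chi(a)$ for all $a\in A$; because $\chi$ is multiplicative and $|\chi(a)|\le\|a\|$, the usual polarization/Cauchy–Schwarz argument upgrades this to $\|\pi(a)\eta_i - \chi(a)\eta_i\|\to 0$ for every $a\in A$. The idea is now to manufacture a non-degenerate Hilbert $^*$-bimodule over $A$ out of $\KH$ on which the net $(\eta_i)$ becomes almost-$\KK_A$-central, so that Property $(T)$ forces a nonzero central vector. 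The natural choice is to let $A$ act on the left through $\pi$ and on the right through $\chi$, i.e. $a\cdot\xi\cdot b := \chi(b)\,\pi(a)\xi$; one checks this is a genuine non-degenerate $^*$-bimodule (the right action is a $^*$-anti-representation since $\chi$ is a $^*$-character valued in $\BC$, hence equal to its own opposite). On this bimodule, $a\cdot\eta_i - \eta_i\cdot a = \pi(a)\eta_i - \chi(a)\eta_i \to 0$; to get almost-$\KK_A$-centrality one extends $\pi$ and $\chi$ to $M(A)$ and uses Lemma~\ref{lem:st-cpt}(b): on a strictly compact $L\subseteq M(A)$, approximating $x a_0$ uniformly by finitely many $x_k a_0$ (for $a_0$ in an approximate identity, or rather using non-degeneracy to reduce to elements of $A$) lets one pass from the pointwise estimate on $A$ to the uniform estimate on $L$. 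Property $(T)$ of $(A,B)$ then yields $\KH^B\neq 0$; but a vector $\xi$ with $b\cdot\xi = \xi\cdot b$ for all $b\in B$ means precisely $\pi(b)\xi = \chi(b)\xi$, i.e. $\xi$ spans a copy of $\chi|_B$ inside $(\pi|_B,\KH)$. Hence $\chi|_B$ is contained in $\pi|_B$, which is the claim.

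For part (b), apply part (a) with $B=A$ (so $\chi|_B=\chi$). Suppose $(\pi,\KH)$ is any non-degenerate representation of $A$ weakly containing $\chi$; part (a) says $\chi$ is actually a subrepresentation of $\pi$, so $\ker\pi\subseteq\ker\chi$. Translating to the spectrum: if $\chi$, regarded as a point of $\widehat A$, lies in the closure of a subset $S\subseteq\widehat A$, then the direct sum $\pi=\bigoplus_{\sigma\in S}\sigma$ weakly contains $\chi$, so by the above $\chi$ is contained in $\pi$, which (since $\chi$ is irreducible) forces $\chi$ to be unitarily equivalent to some $\sigma\in S$, i.e. $\chi\in S$. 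Thus $\{\chi\}$ is closed and no point of $\widehat A\setminus\{\chi\}$ has $\chi$ in its closure; equivalently, $\chi$ is an isolated point of $\widehat A$. (One should phrase this cleanly using that a point is isolated iff it is not in the closure of its complement, and that $\chi$ being contained in a direct sum of irreducibles means it is equivalent to one of the summands — the latter because $\chi$ is one-dimensional, hence its image under the representation is a minimal projection.)

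The main obstacle I anticipate is the verification of almost-$\KK_A$-centrality of $(\eta_i)$ on the constructed bimodule: the bimodule axioms and the pointwise estimate on $A$ are routine, but getting the uniform estimate over an arbitrary strictly compact $L\subseteq M(A)$ requires care, since $\pi$ and $\chi$ only extend to $M(A)$ in the strict-to-strong* sense and one must genuinely invoke Lemma~\ref{lem:st-cpt}(b) to discretize $L$ against an approximate unit before applying the $A$-pointwise convergence. Everything else — the passage from weak containment to near-eigenvectors, and the spectrum-topology bookkeeping in (b) — is standard.
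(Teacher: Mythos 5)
Your proposal is correct and follows essentially the same route as the paper: the same bimodule with left action $\pi$ and right action $\chi$, the same appeal to Lemma~\ref{lem:st-cpt}(b) to upgrade the pointwise estimate on $A$ to uniformity over strictly compact subsets of $M(A)$, and the same reduction of (b) to (a) via the hull--kernel description of closures in $\widehat{A}$. The only detail worth recording is that the paper implements the step you flagged as delicate by fixing a single $a_0\in A$ with $|\chi(a_0)|=1$ and replacing $\xi_i$ by $\pi(a_0)\xi_i/\Vert\pi(a_0)\xi_i\Vert$, so that every $x\in M(A)$ acts on these vectors through $\pi(xa_0)$ with $xa_0\in A$, which is exactly the ``absorb an element of $A$'' device you proposed via an approximate identity.
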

\begin{proof}
Notice that Item (b) follows from Item (a), by considering $A=B$ and $(\pi, \KH)\in \widehat{A}$. 
Hence, it suffices to prove Item (a).

By the assumption, there exists a net $(\xi_i)_{i\in \KI}$ of unit vectors
	in $\KH$ such that 
	\begin{equation}\label{eqt:limit-phi}
	{\lim}_i \Vert \pi(a)\xi_i-\chi(a)\xi_i\Vert =0 \qquad \text{for all} \quad a\in A.
	\end{equation}
	We have to prove that $\pi|_B$ actually contains $\chi|_B$.

	Define a Hilbert $^*$-bimodule structure (over $A$) on $\KH$ by 
	$$
	a\cdot \xi= \pi(a)\xi \qquad\text{and}\qquad \xi\cdot a= \chi(a)\xi  \qquad \text{for all} \quad a\in A, \xi\in \KH.
	$$
	Fix an element $a_0\in A$ with $|\chi(a_0)| = 1$. 
	It follows from Relation \eqref{eqt:limit-phi}  that 
	\begin{equation}\label{eqt:lim-phi-xi}
	{\lim}_i\Vert \pi(a_0)\xi_i\Vert = |\chi(a_0)| = 1. 
	\end{equation}
	Thus, without loss of generality, we may assume that $\pi(a_0)\xi_i\neq 0$ for all $i\in \KI$, and 
	we set 
	$$\eta_i:=\dfrac{\pi(a_0)\xi_i}{\Vert \pi(a_0)\xi_i\Vert}.$$ 
	
	Let $K$ be a non-empty  strictly compact subset of $M(A)$ and let $\epsilon>0$.
	By Lemma \ref{lem:st-cpt}, 
	$$
	C:={\sup}_{y\in K} \Vert y\Vert <\infty$$
	and there exist $x_1,\dots, x_n\in K$ such that
	for every $x\in K$, we can find $k\in  \{1,\dots, n\}$ with 
	\begin{equation}\label{eqt:x-k}
	\Vert xa_0 -x_ka_0\Vert \leq  \epsilon/12.
	\end{equation}
	Moreover, Relations \eqref{eqt:limit-phi} and \eqref{eqt:lim-phi-xi} produce $i_0\in \KI$ such that for $i\geq i_0$ and $l\in \{1,\dots,n \}$, one has
	\begin{equation}\label{eqt:*}
	\frac{1}{\Vert \pi(a_0)\xi_{i}\Vert}\leq 2, \quad 
	\Vert \pi(a_0)\xi_{i}-\chi(a_0)\xi_{i}\Vert \leq  \frac{\epsilon}{4 C} \quad 
	\text{and} \quad \Vert \pi(x_la_0)\xi_{i}- \chi(x_la_0)\xi_{i}\Vert \leq  \dfrac{\epsilon}{12}
	\end{equation}
		
	Let $x\in K$. Choose an integer $k$ in $\{1,\dots, n\}$ such that Relation \eqref{eqt:x-k} holds. 
	We then have, using Relations \eqref{eqt:x-k} and \eqref{eqt:*}, that 
	\begin{align*}
	\Vert x\cdot \eta_i- \eta_i\cdot x\Vert
	\ =\ & \frac{1}{\Vert \pi(a_0)\xi_i\Vert} \Vert \pi(x)\pi(a_0)\xi_i- \chi(x)\pi(a_0)\xi_i\Vert\\
	\ \leq\ & 2\Vert\pi(xa_0)\xi_i-\pi(x_ka_0)\xi_i\Vert+ 2 \Vert\pi(x_ka_0)\xi_i-\chi(x_ka_0)\xi_i\Vert+\\
	\ &\qquad \quad 2\Vert \chi(x_ka_0)\xi_i- \chi(xa_0)\xi_i\Vert+ 2|\chi(x)|\Vert\chi(a_0)\xi_i- \pi(a_0)\xi_i\Vert\\
	\ \leq \ & \epsilon/6 + \epsilon/6 + \epsilon/6 + \epsilon/2
	\end{align*}
	This shows that $(\eta_i)_{i\in \KI}$ is an almost $\KK_A$-central net of unit vectors in $\KH$. 
	
	Since, by the assumption,  $(A,B)$ has Property (T),  it follows that $\KH^B\neq \{0\}$. 
	Hence, there exists a non-zero vector $\xi\in \KH$ such that 
	$$
	\pi(b)\xi= \chi(b) \xi \qquad \text{for all } \quad b\in B.
	$$
	This means that $\chi|_B$ is contained in the representation $\pi|_B$.  
\end{proof}

\begin{remark}
As is well-known
(see Theorem~1.2.5 in \cite{BHV}),
 Property $(T)$ of a locally compact group $G$
can be characterized  by the fact that one (or equivalently, any)  finite dimensional irreducible representation
of $G$  is an isolated point in the dual space  of $G$. It is natural to ask whether Proposition~\ref{prop:prop-T>isolate} remains true when   $\chi$ is an arbitrary  finite dimensional representation of  the $C^*$-algebra $A$.
We will not deal with this question in the current paper. 
\end{remark}

\section{A bimodule over $C_r^*(G)$ associated to a unitary representation}
\label{S3}
Let  $G$ be a locally compact group. We will need to associate to every unitary representation  of $G$ a bimodule of $C^*_r(G),$
via a standard procedure (see \cite{ConnesJones}, \cite{B}).

As usual, we denote by $L^p(G)$ the Banach space $L^p(G,\mu)$
for a fixed Haar measure $\mu$ on $G$.  

\begin{lem}
\label{lem:rep-and-inv-vect}
Let $G$ be a locally compact unimodular group. 
Let $s\mapsto u_s$  be a unitary representation of $G$
on a Hilbert space $\KH.$ 

\noindent
(a) The space $\KH\otimes L^2(G)=L^2(G;\KH)$ is a non-degenerate Hilbert $^*$-bimodule over $C^*_r(G)$ for the following 
left and right actions:
$$(g\cdot \eta)(t):= \int_G g(s)\eta(s^{-1}t)\ \!d\mu(s) \quad \text{and} \quad
(\eta\cdot g)(t):= \int_G g(r^{-1}t)u_{t^{-1}r}(\eta(r))\ \! d\mu(r),$$
where $g\in L^1(G), \eta\in L^2(G;\KH)$ and $t\in G$. 

\noindent
(b) Let $H$ be a closed subgroup of $G$ and $\zeta\in (\KH\otimes L^2(G))^{C^*_r(H)}$. Then, for every $s\in H$, 
we have 
$$\mu\big(\{t\in G: u_s(\zeta(t)) \neq \zeta(sts^{-1})\}\big) = 0.$$
\end{lem}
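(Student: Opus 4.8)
\textbf{Proof plan for Lemma~\ref{lem:rep-and-inv-vect}.}
The plan is to treat the two parts separately, with part~(a) being a routine verification and part~(b) being the substantive point. For part~(a), I would first check that the two formulas define bounded operators: the left action is just the usual left regular representation tensored with the identity on $\KH$ (convolution on the $L^2(G)$ factor), so it extends to $C^*_r(G)$ by definition of the reduced norm; for the right action, I would rewrite $(\eta\cdot g)(t)=\int_G g(r^{-1}t)\,u_{t^{-1}r}(\eta(r))\,d\mu(r)$ and recognize it, after the substitution making use of unimodularity, as a twisted convolution that is unitarily equivalent to $\mathrm{id}_{\KH}\otimes\lambda_G$ composed with a pointwise unitary; concretely the map $W\colon L^2(G;\KH)\to L^2(G;\KH)$, $(W\eta)(t)=u_{t}(\eta(t))$ (or $u_{t^{-1}}$, whichever conjugates correctly) intertwines the right action with $\mathrm{id}\otimes\rho_G$, hence it too is bounded with reduced-norm estimate. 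Then I would verify the bimodule axioms (the left and right actions commute, each is a $^*$-homomorphism / $^*$-anti-homomorphism respectively, and non-degeneracy) by a direct computation on $L^1(G)$ elements and a density argument; non-degeneracy follows because an approximate identity of $C^*_r(G)$ acts on the left as an approximate identity for convolution on $L^2$.

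For part~(b), the key idea is to exploit that $\zeta$ is central for the whole $C^*_r(H)$, so in particular $g\cdot\zeta=\zeta\cdot g$ for all $g\in L^1(H)\subseteq L^1(G)$ (viewing $L^1(H)$ inside $M(C^*_r(G))$ via the inclusion, or more elementarily just using functions supported on $H$ — here one must be a little careful since $H$ may be null in $G$; the cleaner route is to use that $C^*_r(H)$ sits in $M(C^*_r(G))$ and that the strict-continuous extension of the bimodule actions lets $\delta_s$ for $s\in H$ act). So first I would pass from the $C^*_r(H)$-centrality to the statement that $\delta_s\cdot\zeta=\zeta\cdot\delta_s$ in $\KH\otimes L^2(G)$ for every $s\in H$, using that the bimodule structure extends to $M(C^*_r(G))$ and the embedding $\delta\colon H\to M(C^*_r(G))$; here I need $G$ unimodular so that $\delta_s$ acts as a genuine unitary on $L^2(G)$ on both sides. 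Then I would compute both sides pointwise: $(\delta_s\cdot\zeta)(t)=\zeta(s^{-1}t)$ from the left action, while $(\zeta\cdot\delta_s)(t)=u_{t^{-1}(ts^{-1})^{-1}}\big(\zeta(ts^{-1})\big)$ — wait, more carefully, from the right-action formula specialised to the point mass $\delta_s$ one gets $(\zeta\cdot\delta_s)(t)=u_{t^{-1}\cdot ts}\big(\zeta(ts)\big)$ after the substitution, i.e. essentially $u_s\big(\zeta(ts)\big)$ up to relabelling. Equating the two $L^2$-functions and doing the change of variable $t\mapsto st$ yields $\zeta(t)=u_{?}\big(\zeta(sts)\big)$, and after tidying the indices one obtains $u_s(\zeta(t))=\zeta(sts^{-1})$ for $\mu$-almost every $t$, which is exactly the claim.

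The main obstacle I anticipate is bookkeeping with the point-mass action and the indices in the right action: the right action is defined via an integral formula on $L^1(G)$, and making sense of "$\zeta\cdot\delta_s$" requires either the strict-continuous extension to $M(C^*_r(G))$ (then one must argue the pointwise formula $(\zeta\cdot\delta_s)(t)=u_{s}(\zeta(ts))$ is the strict limit of $(\zeta\cdot g_n)(t)$ for $g_n\to\delta_s$ strictly, an approximate-identity-type computation) or an approximation of $\zeta$ by continuous compactly supported $\KH$-valued functions on which the formulas are literally valid pointwise and then a limiting argument controlling the null sets. A secondary subtlety is that the almost-everywhere statement must be phrased for each fixed $s\in H$ (not jointly in $(s,t)$), which is automatic once one works with the $L^2$-equality $\delta_s\cdot\zeta=\zeta\cdot\delta_s$ for each $s$. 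Unimodularity of $G$ is used twice: to make $\delta_s$ a two-sided unitary and to justify the change of variables in the final step; I would flag this explicitly.
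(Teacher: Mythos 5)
Your plan is correct and follows essentially the same route as the paper: part (a) via recognizing the right action as the right regular representation conjugated by the pointwise ("Fell") unitary $W(\eta)(t)=u_t(\eta(t))$, and part (b) via extending the bimodule structure to $M(C^*_r(G))$ so that $\delta_s\cdot\zeta=\zeta\cdot\delta_s$ for $s\in H$ and then computing pointwise. Your intermediate index bookkeeping for $(\zeta\cdot\delta_s)(t)$ is off (it should be $u_{s^{-1}}(\zeta(ts^{-1}))$, not $u_s(\zeta(ts))$), but you flag this yourself and the final identity you state is the correct one, so there is no gap in the approach.
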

\begin{proof}
(a) As the left-hand displayed equality is given by the left regular representation, it defines a $^*$-representation of $C^*_r(G)$. 
On the other hand, the right-hand displayed equality is given by the conjugation of the right regular representation by the ``Fell unitary'' $U\in \CB(L^2(G;\KH))$, where 
$$U(\eta)(t):= u_t(\eta(t)) \qquad (\eta\in L^2(G;\KH); t\in G).$$
Thus, it defines a $^*$-anti-representation of $C^*_r(G)$. 
It is easy to check that this $^*$-representation and this $^*$-anti-representations are non-degenerate, and they commute with each other. 

\noindent
(b) As noted in the above, $\KH\otimes L^2(G)$ is a unital Hilbert $^*$-bimodule over $M(C^*_r(G))$. 
Moreover, $\zeta$ is a $M(C^*_r(H))$-central vector. 
If $\delta_s$ is the canonical image of $s$ in $M(C^*_r(G))$, then 
$$\delta_s\cdot \zeta = \zeta \cdot \delta_s,$$
and the claim follows.
\end{proof}

\section{Proofs of the results}
\label{S4}
\subsection{Proof of Theorems~\ref{thm:full} and \ref{thm:full-QG}}
\label{S4-1}
 
To show  Item (a) of Theorem~\ref{thm:full}, assume that the pair $(C^*(G),C^*(H))$ has Property $(T)$. It follows  from Proposition \ref{prop:prop-T>isolate}(a) that $(G,H)$ has Property (T), by the definition of Property (T) for pairs of groups.
 Conversely, the fact  that  Property (T) for $(G,H)$ implies Property (T) for $(C^*(G),C^*(H))$
 was proved in \cite[Proposition 4.1(a)]{Ng-nu-T}. 
 
  Item (b) of Theorem~\ref{thm:full} follows from  Item (a) in combination with  \cite[Proposition 4.1(c)]{Ng-nu-T}.
 
 Theorem~\ref{thm:full-QG} follows from Proposition \ref{prop:prop-T>isolate}(b) and \cite[Proposition 3.2]{CN}. 
 
\subsection{Proof of Theorem~\ref{thm:2}}
  \label{S4:2}
 Let $\KH$ is a non-degenerate Hilbert $^*$-bimodule over $C^*_r(G)$.
Let $u_\KH$ and $\Delta(G)$ be as in Lemma \ref{Lem1}. 
Then $u_\KH$ is weakly contained in $\lambda_{G\times G}$ (by Lemma~\ref{Lem1}(b)). 
Since $\lambda_{G\times G}|_{\Delta(G)}$ is weakly contained in the left regular representation $\lambda_{\Delta(G)}$  of $\Delta(G)$ (see \cite[Proposition F.1.10]{BHV}), 
it follows that  there is no almost $u_\KH|_{\Delta(G)}$-invariant net of unit vectors, since ${\Delta(G)}\cong G$ is non-amenable. 
Hence, $\KH$ has no almost $\mathfrak{K}_{C^*_r(G)}$-central net of unit vectors,
by Lemma~\ref{Lem1}(a). It follows that $C^*_r(G)$ has strong Property (T).

\subsection{Proof of Corollary~\ref{cor-thm:2} }
  \label{S4:3}
  Under the assumption that $C^*_r(G)$ is nuclear, the equivalence of (1) and (2) follows from \cite[Theorem 8]{Ng-str-amen}. 
By Theorem~~\ref{thm:2}, we know that (1) implies (3).
Finally, assume that $G$ is amenable. 
Since, by assumption, $G$ is not compact, it follows from Theorem~\ref{thm:full} (see also the comments after 
the statement of Theorem \ref{thm:full-QG}) that $C^*_r(G)$ does not have Property $(T).$ 
Hence, (3) implies (1), 
and the proof is complete.

\subsection{Proof of Theorem~\ref{thm:main}}
\label{S4-4}
We fix a conjugation invariant compact neighborhood $V$ of the identity $e$ of the IN-group $G$ and choose a Haar measure $\mu$ on $G$. 
Let us recall the following two well-known facts:
\begin{itemize}
	\item $G$ is unimodular. 
	
	\item the characteristic function $\chi_V$ is in the center of the algebra $L^1(G)$. 
\end{itemize}

\noindent
(a) Notice that as $V$ is conjugation invariant, we have 
$$\chi_V(s^{-1}t) = \chi_V(ts^{-1}) \qquad (s,t\in G).$$ 
Let $(u,\KH)$ be a unitary representation of $G$ and $(\xi_i)_{i\in \KI}$ be an almost $u$-invariant net of unit vectors in $\KH$. 
We consider $\KH\otimes L^2(G) = L^2(G;\KH)$ to be a Hilbert $^*$-bimodule over $C^*_r(G)$, as in Lemma \ref{lem:rep-and-inv-vect}(a). 

By the assumption, for any $\epsilon > 0$ and any continuous function $g$ on $G$ with its support, $\supp g$, being compact, there exists $i_0\in \KI$ such that  for any $i\geq i_0$, one has $\sup_{s\in \supp g} \|\xi_i - u_{s^{-1}}\xi_i\| < \epsilon$. 
Hence, for any $i\geq i_0$, 
\begin{align*}
\|g\cdot (\xi_i \otimes \chi_V) &- (\xi_i \otimes \chi_V)\cdot g\|_{L^2(G;\KH)}^2\\ 
& = \int_G \Big\|\int_G g(s) \big(\chi_V(s^{-1}t) \xi_i - \chi_V(ts^{-1})u_{s^{-1}}\xi_i\big)\ \! ds\Big\|_\KH^2 \ \! dt \\
& \leq \int_G \Big(\int_{\supp g} |g(s)|\|\xi_i - u_{s^{-1}}\xi_i \|\chi_V(ts^{-1})\ \! ds\Big)^2 \ \! dt\\
& \leq \|g\|_{L^2(G)}^2 \epsilon^2 \int_G \int_{\supp g} \chi_V(ts^{-1})\ \! ds \ \! dt\\
& \leq \mu(\supp g)\mu(V)\|g\|_{L^2(G)}^2 \epsilon^2
\end{align*}
Thus, by an approximation argument,  it follows that the bounded net $(\xi_i \otimes \chi_V)_{i\in \KI}$ is almost-$C^*_r(G)$-central.

We consider $\epsilon >0$ and denote $\kappa:=\|\chi_V^2\|_{L^2(G)}$, where $\chi_V^2$ is the convolution product of $\chi_V$ with itself. 
Suppose that $K\subseteq M(C^*_r(G))$ is a strictly compact subset. 
Then by Lemma \ref{lem:st-cpt}(b) and the above, there exists $i_1\in \KI$ such that for any $i\geq i_1$, one has
\begin{equation}\label{eqt:K-chi-V-alm-cent}
{\sup}_{y\in K} \|y\chi_V \cdot (\xi_i \otimes \chi_V) - (\xi_i \otimes \chi_V) \cdot y\chi_V\|_{L^2(G;\KH)} < \kappa \epsilon.
\end{equation}
Moreover, one can find $i_2\geq i_1$ such that for every $i\geq i_2$, 
\begin{equation}\label{eqt:chi-V-alm-cent}
\|\chi_V \cdot (\xi_i \otimes \chi_V) - (\xi_i \otimes \chi_V) \cdot \chi_V\|_{L^2(G;\KH)} < \kappa\epsilon. 
\end{equation}
Set $h:= \dfrac{\chi_V^2}{\kappa}$.
Then 
$$\chi_V\cdot \frac{\xi_i \otimes \chi_V}{\kappa} = \xi_i \otimes h.$$
Inequalities  \eqref{eqt:K-chi-V-alm-cent} and \eqref{eqt:chi-V-alm-cent}, together with the fact that $\chi_V$ is in the center of $C^*_r(G)$, imply that
$$\|y\cdot (\xi_i \otimes h) - (\xi_i \otimes h)\cdot y\|_{L^2(G;\KH)} \leq (1+\|y\|)\epsilon \qquad \text{for all} \quad y\in K.$$
As $K$ is norm-bounded (by Lemma \ref{lem:st-cpt}(a)), we conclude that 
$(\xi_i \otimes h)_{i\in \KI}$ is an almost-$\KK_{C^*_r(G)}$-central net of unit vectors. 
Therefore, strong Property $(T)$ of $\big(C^*_r(G), C^*_r(H)\big)$ implies that
\begin{equation}\label{eqt:conseq-str-T}
\big\|\xi_i \otimes h - P^{C^*_r(H)}(\xi_i \otimes h)\big\|_{L^2(G;\KH)} \to_{i} 0. 
\end{equation}

For each $i\in \KI$, put 
$$\eta_i:=P^{C^*_r(H)}(\xi_i \otimes h)\in L^2(G;\KH)^{C^*_r(H)}$$ and let $\zeta_i$ 
 be the restriction of $\eta_i$ on $V^2:=\{st:s,t\in V\}$. 
As $h$ vanishes outside $V^2$, it follows from \eqref{eqt:conseq-str-T} that 
\begin{equation}\label{eqt:xi-otimes-h-close-to-zeta}
\|\xi_i \otimes h - \zeta_i\|_{L^2(G;\KH)} \to_{i} 0. 
\end{equation}
For a fixed $s\in H$, the condition $\eta_i\in L^2(G;\KH)^{C^*_r(H)}$ implies that 
$$u_s(\eta_i)(t) = \eta_i(sts^{-1})\quad \text{for } \mu\text{-almost every}\quad t\in G,$$ 
by Lemma \ref{lem:rep-and-inv-vect}(b).
Therefore, the  invariance of $V^2$ under conjugation ensures that 
\begin{equation*}\label{eqt:G-inv}
u_s(\zeta_i(t)) = \zeta_i(sts^{-1}) \quad \text{ for } \mu\text{-almost all }t\in V^2.
\end{equation*}
This, together with the inequality 
$$\int_{V^2} \|\zeta_i(t)\|\ \!dt  \leq  \mu(V^2)^{1/2} \|\zeta_i\|_{L^2(G;\KH)},$$
 implies that  $\int_{V^2} \zeta_i(t)\ \!dt$ exists and lies inside the space,  $\KH^{u|_H}$, 
 of $H$-invariant vectors in $\KH.$ 

Finally, it follows  from  
$$\int_{V^2} \|(\xi_i\otimes h - \zeta_i)(t)\|\ \!dt \leq \mu(V^2)^{1/2} \|\xi_i\otimes h - \zeta_i\|_{L^2(G;\KH)}$$
and from  \eqref{eqt:xi-otimes-h-close-to-zeta} that 
$$\Big\|\frac{\mu(V)^2}{\kappa} \xi_i - \int_{V^2} \zeta_i(t)\ \!dt\Big\|_\KH = \Big\|\int_{V^2} \xi_i\otimes h(t) - \zeta_i(t)\ \!dt\Big\|_\KH\to_i 0.$$ 
This finishes the proof  of Item (a).

\noindent
(b) The claim follows from Item (a) and \cite[Proposition 4.1(b)]{Ng-nu-T}.

\noindent
(c) The claim follows from Item   (a) and \cite[Proposition 4.1(c)]{Ng-nu-T}.

\begin{remark}
(i) If one relaxes the assumption of Theorem \ref{thm:main} to $\big(C^*_r(G), C^*_r(H)\big)$ having Property $(T)$, then the proof of Theorem \ref{thm:main} still produces a non-zero element $\eta\in L^2(G;\KH)^{C^*_r(H)}$, but we do not know whether the 
$H$-invariant vector $\int_{V^2} \eta(t)\ \!dt$ is non-zero. 

\noindent
(ii) Let us say that  a net $(\xi_i)_{i\in \KI}$ is \emph{almost-$\KK^0_A$-central} if $\sup_{x\in K} \| x \cdot \xi_i - \xi_i \cdot x\| \to_i 0$ for any subset $K\subseteq M(A)$ such that $Ka$ is  norm-compact for every $a\in A$. 
If we define weaker versions of Property $(T)$ and strong Property $(T)$ with almost-$\KK_A$-central nets of unit vectors being replaced by almost-$\KK^0_A$-central nets of unit vectors, then - as the proofs show - all the results in the article remain true. 
\end{remark}

\subsection{Proof of Proposition~\ref{Prop-IN-Nuclear}}
  \label{S4:5}
Let $V$ and $\chi_V$ be as in the proof of Theorem \ref{thm:main}. 
Consider $L^2(G)$ as a Hilbert $^*$-bimodule over $C^*_r(G)$ (see Lemma \ref{lem:rep-and-inv-vect}). 
Since $\chi_V$ is in the center of $L^1(G)$, we know $\chi_V$ is a $C^*_r(G)$-central vector in $L^2(G)$. 
Thus, $\rho\in C^*_r(G)^*_+$ defined by $\rho(x):= \langle x(\chi_V), \chi_V\rangle$ ($x\in C^*_r(G)$) is tracial. 
The conclusion now follows from Corollary \ref{cor-thm:2} (note that compact groups are amenable).



\end{document}